\title%[]
{A central limit theorem for $m$-dependent variables}
\date{27 August, 2021}
\author{Svante Janson}
\thanks{Partly supported by the Knut and Alice Wallenberg Foundation}
\address{Department of Mathematics, Uppsala University, PO Box 480,
SE-751~06 Uppsala, Sweden}
\email{svante.janson@math.uu.se}
\urladdr{http://www.math.uu.se/svante-janson}
\keywords{$m$-dependent; central limit theorem; asymptotic normality;
Lindeberg condition; Lyapunov condition}
\subjclass[2020]{60F05} 
\numberwithin{equation}{section}
\renewcommand\le{\leqslant}
\renewcommand\ge{\geqslant}
\theoremstyle{plain}% default
\newtheorem{theorem}{Theorem}[section]
\newtheorem{lemma}[theorem]{Lemma}
\theoremstyle{definition}
\newtheorem{example}[theorem]{Example}
\newtheorem{problem}[theorem]{Problem}
\newtheorem{remark}[theorem]{Remark}
\theoremstyle{remark}
\newenvironment{romenumerate}[1][-10pt]{% optional argument changes indentation
\addtolength{\leftmargini}{#1}\begin{enumerate}% gives (i), (ii) etc.
 }{\end{enumerate}}
\newcounter{oldenumi}
\newcounter{thmenumerate}
\newcounter{xenumerate}   %no left indentation; thus wider lines
\newcounter{steps}
\newcommand\stepx[1]{\smallskip\noindent\refstepcounter{steps}%
 \emph{Step \arabic{steps}: #1}\noindent}
\newcommand{\refT}[1]{Theorem~\ref{#1}}
\newcommand{\refTs}[1]{Theorems~\ref{#1}}
\newcommand{\refL}[1]{Lemma~\ref{#1}}
\newcommand{\refR}[1]{Remark~\ref{#1}}
\newcommand{\refS}[1]{Section~\ref{#1}}
\newcommand{\refE}[1]{Example~\ref{#1}}
\newcommand{\refEs}[1]{Examples~\ref{#1}}
\xdef\klockan{\the\count1.0\the\count255}
\xdef\klockan{\the\count1.\the\count255}\fi
\newcommand{\sumin}{\sum_{i=1}^n}
\newcommand\set[1]{\ensuremath{\{#1\}}}
\newcommand\xpar[1]{(#1)}
\newcommand\bigpar[1]{\bigl(#1\bigr)}
\newcommand\Bigpar[1]{\Bigl(#1\Bigr)}
\newcommand\lrpar[1]{\left(#1\right)}
\newcommand\bigsqpar[1]{\bigl[#1\bigr]}
\newcommand\sqpar[1]{[#1]}
\newcommand\xsqpar[1]{[#1]}
\newcommand\Bigsqpar[1]{\Bigl[#1\Bigr]}
\newcommand\lrsqpar[1]{\left[#1\right]}
\newcommand\xcpar[1]{\{#1\}}
\newcommand\Bigcpar[1]{\Bigl\{#1\Bigr\}}
\newcommand\bigabs[1]{\bigl\lvert#1\bigr\rvert}
\newcommand\lrabs[1]{\left\lvert#1\right\rvert}
\def\rompar(#1){\textup(#1\textup)}    % usage: \rompar(...)
\newcommand\xfrac[2]{#1/#2}
\def\xexp(#1){e^{#1}}
\newcommand\ceil[1]{\lceil#1\rceil}
\newcommand\ntoo{\ensuremath{{n\to\infty}}}
\newcommand\asntoo{\text{as }\ntoo}
\newcommand\bmin{\wedge}
\newcommand\norm[1]{\lVert#1\rVert}
\newcommand\normm[1]{\norm{#1}_2}
\newcommand\punkt{.\spacefactor=1000}    % om problem!
\newcommand\iid{i.i.d\punkt}    
\newcommand\ie{i.e\punkt}
\newcommand\eg{e.g\punkt}
\newcommand{\tend}{\longrightarrow}
\newcommand\dto{\overset{\mathrm{d}}{\tend}}
\newcommand\pto{\overset{\mathrm{p}}{\tend}}
\newcounter{CC}
\newcommand{\CC}{\stepcounter{CC}\CCx} %new constant C_i
\newcommand{\CCx}{C_{\arabic{CC}}}     %repeats the last C_i
\newcommand{\CCreset}{\setcounter{CC}0} %repeats from  C_1
\newcounter{cc}
\newcommand\E{\operatorname{\mathbb E{}}}
\renewcommand\P{\operatorname{\mathbb P{}}}
\newcommand\Var{\operatorname{Var}}
\newcommand\Cov{\operatorname{Cov}}
\newcommand\ga{\alpha}
\newcommand\gd{\delta}
\newcommand\gD{\Delta}
\newcommand\gam{\gamma}
\newcommand\gs{\sigma}
\newcommand\gss{\sigma^2}
\newcommand\eps{\varepsilon}
\renewcommand\phi{\xxx}  %% WARNING
\newcommand\cF{\mathcal F}
\newcommand\indic[1]{\boldsymbol1\xcpar{#1}} 
\newcommand\Bigindic[1]{\boldsymbol1\Bigcpar{#1}} 
\newcommand\qw{^{-1}}
\newcommand\qq{^{1/2}}
\newcommand\qqw{^{-1/2}}
\newcommand{\gsf}{$\gs$-field}
\newcommand\lhs{left-hand side}
\newcommand\rhs{right-hand side}
\newcommand\xoo{_1^\infty}
\newcommand\mdep{$m$-dependent}
\newcommand\mndep{$(m_n)$-dependent}
\newcommand\gDM{\gD}
\newcommand\nix{_{ni}}
\newcommand\nj{_{nj}}
\newcommand\nk{_{nk}}
\newcommand\sumix[1]{\sum_{#1=i+1}^\Nn}
\newcommand\yy{\upsilon}
\newcommand\sumiln{\sum_{i=1}^{\Nn}}
\newcommand\mn{m_n}
\newcommand\Nnx{N_n}
\newcommand\Nn{{\Nnx}}
\newcommand\Lyap{Lyapunov}
\newcommand\CS{Cauchy--Schwarz}
\newcommand\CSineq{\CS{} inequality}
\begin{document}

\begin{abstract} 
We give a simple and general central limit theorem for a triangular array of
$m$-dependent variables. The result requires only a Lindeberg condition and
avoids unnecessary extra conditions that have been used earlier.
The result applies also to increasing $m=m(n)$, provided the Lindeberg
condition is modified accordingly.
This improves earlier results by several authors.
\end{abstract}

\maketitle

\section{Introduction}\label{S:intro}

Central limit theorems for \mdep{} variables under various conditions
have a long history. %, which is only briefly summarised here.
Pioneering results, for a fixed $m$, were given by
\citet{HoeffdingR} %sequence. uniform 3rd moments.
and \citet{Diananda} %sequence, Lindeberg
(for an \mdep{} sequence),
and
\citet{Orey}
(more generally, and also for a triangular array). 
%triangular. Lindeberg + moments.
The results were extended to the case of increasing $m=m(n)$,
see for example
\citet{Bergstrom}, %Section 6 
\citet{Berk}, 
\citet{RomanoWolf}.

The purpose of the present paper is to give a simple and general 
central limit theorem 
which includes several previous results, but
to our knowledge has not been stated before in this form.
We state first the case of a fixed $m$, where we only have to assume the
usual Lindeberg condition. For notation, see \refS{Snot}.

\begin{theorem}
  \label{Tm}
Let $m\ge0$ be fixed. Suppose that $(X\nix)_{n\ge1,1\le i \le \Nn}$ is an
\mdep{} triangular array
and denote its row sums by $S_n:=\sumiln X\nix$.
Suppose further that the variables $X\nix$
have finite second moments and
$\E X\nix=0$. % $\forall n,i$.
Let
\begin{align}\label{tmgss}
  \gss_n:=\Var S_n % \in\ooo
,\end{align}
and assume that $\gss_n>0$ for all large $n$.
Finally, assume  the usual Lindeberg condition:
for every $\eps>0$, as \ntoo,
\begin{align}\label{tmL}
\frac{1}{\gss_n} \sumiln \E\bigsqpar{X\nix^2\indic{|X\nix|>\eps\gs_n}} \to0.
\end{align}
Then
\begin{align}\label{tmclt}
  S_n/\gs_n\dto N(0,1)
\qquad\asntoo.
\end{align}
\end{theorem}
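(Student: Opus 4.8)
The plan is to reduce the \mdep{} case to a sum of independent blocks and then apply the classical Lindeberg--Feller theorem. First I would fix $\eps>0$ and introduce a blocking scale; the subtlety is that for fixed $m$ one can afford very short "gap" blocks of length $m$ that separate "main" blocks, but one must choose the block length $K=K_n\to\infty$ slowly enough that the Lindeberg condition survives and fast enough that the discarded gap blocks are negligible in variance. Concretely, partition $\{1,\dots,\Nn\}$ consecutively into alternating blocks of main variables of length $K_n$ and separating blocks of length $m$; let $Y_{n,r}$ be the sum over the $r$-th main block and $Z_{n,r}$ the sum over the $r$-th gap block, so $S_n=\sum_r Y_{n,r}+\sum_r Z_{n,r}$. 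By \mdep ence the $Y_{n,r}$ are independent across $r$ (and likewise the $Z_{n,r}$ are independent of each other), which is the whole point of the construction.

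The key steps, in order, are: (1) show $\Var\bigpar{\sum_r Z_{n,r}}=o(\gss_n)$, so that $\sum_r Z_{n,r}/\gs_n\pto0$ and hence it suffices to prove asymptotic normality of $T_n:=\sum_r Y_{n,r}$; for this one needs a bound on the variance of a block sum of length $\ell$ in terms of $\ell$ and the individual second moments, using that covariances vanish at lag $>m$, combined with a lower bound $\gss_n\to\infty$ that itself must be extracted from the Lindeberg condition (if $\gss_n$ stayed bounded, \eqref{tmL} would force the array to be asymptotically degenerate). (2) Verify that $\Var T_n/\gss_n\to1$, again from the covariance-truncation estimate. (3) Check the Lindeberg condition for the independent summands $Y_{n,r}$: bound $\E\bigsqpar{Y_{n,r}^2\indic{|Y_{n,r}|>\eps\gs_n}}$ by expanding $Y_{n,r}^2$ into at most $(2m{+}1)K_n$ cross terms $X\nix X_{nj}$ with $|i-j|\le m$, using $|X\nix X_{nj}|>\eps^2\gss_n/(CK_n)$ on the relevant event for at least one factor, and Cauchy--Schwarz, to reduce everything to the original Lindeberg sum \eqref{tmL} times a factor that is controlled provided $K_n\to\infty$ slowly (e.g.\ $K_n=o(\text{something})$ governed by the rate in \eqref{tmL}, or simply choose $K_n$ via a diagonal argument). (4) Apply the Lindeberg--Feller CLT to $T_n/\gs_n$ and combine with step (1) via Slutsky.

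The main obstacle is step (3): one must choose the block length $K_n\to\infty$ uniformly correctly so that the blocked array still satisfies a Lindeberg condition, and the clean way around any circularity is to note that \eqref{tmL} holding for every fixed $\eps$ yields, by a standard diagonalization, a single sequence $\eps_n\downto0$ with $\gss_n^{-1}\sum_i\E[X\nix^2\indic{|X\nix|>\eps_n\gs_n}]\to0$; then picking $K_n\to\infty$ slowly enough relative to this rate makes the block Lindeberg sum $o(1)$. A secondary but essential point, needed in steps (1)--(3), is the elementary variance bound: for consecutive indices forming a set $B$ of size $\ell$, $\Var\bigpar{\sum_{i\in B}X\nix}\le (2m{+}1)\sum_{i\in B}\E X\nix^2$ by Cauchy--Schwarz on each lag-$\le m$ covariance; this makes the gap-block variance $\le (2m{+}1)\sum_{\text{gaps}}\E X\nix^2$ and, after showing $\sum_i\E X\nix^2=O(\gss_n)$ likewise follows from the same lemma applied to the whole row, one gets the gaps contribute a fraction $O(m/K_n)=o(1)$ of the total, completing step (1). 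The rest is Slutsky and bookkeeping.
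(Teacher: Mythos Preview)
Your blocking outline is the classical Bernstein big-block/small-block route, which is genuinely different from the paper's proof (the paper approximates the partial-sum process by a martingale and applies a martingale CLT, avoiding blocks altogether). The blocking method is exactly what Orey and Ros\'en used, and it works cleanly \emph{provided} the extra condition \eqref{cond+} holds, i.e.\ $\sumiln \E X\nix^2=O(\gss_n)$. The whole point of \refT{Tm} is that this condition is \emph{not} assumed.

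Your proposal has a concrete gap precisely here. In step~(1) you write that ``$\sum_i\E X\nix^2=O(\gss_n)$ likewise follows from the same lemma applied to the whole row''. It does not: the covariance bound $\Var\bigpar{\sum_{i\in B}X\nix}\le(2m{+}1)\sum_{i\in B}\E X\nix^2$ applied to the whole row gives $\gss_n\le(2m{+}1)\sum_i\E X\nix^2$, which is the \emph{reverse} inequality. In fact $\sum_i\E X\nix^2=O(\gss_n)$ is exactly \eqref{cond+}, and \refE{Econd+} exhibits a $1$-dependent array in which $\gss_n\to1$ while $\sum_i\E X\nix^2=1+2n^{1-2\ga}\to\infty$; there your bound on the total gap variance, $(2m{+}1)\sum_{\text{gaps}}\E X\nix^2\le \tfrac{Cm}{K_n}\sum_i\E X\nix^2$, yields a quantity of order $n^{1-2\ga}/K_n$ rather than $O(m/K_n)$, so the ``gaps are an $O(m/K_n)$ fraction'' claim is false. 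The same unbounded $\sum_i\E X\nix^2$ contaminates your step~(3), since after Cauchy--Schwarz on the cross terms you again pick up factors $\E X\nix^2$ that you cannot sum against $\gss_n$.

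One can sometimes rescue the blocking argument in specific examples by choosing $K_n$ adaptively (in \refE{Econd+} any $n^{1-2\ga}\ll K_n\ll n$ works), but there is no uniform choice driven only by the hypotheses of \refT{Tm}. This is why the paper abandons blocking and uses a martingale approximation: the quadratic-variation estimate \eqref{paj} is done pathwise after the preliminary truncation $|X\nix|\le\eps_n\gs_n$, and never requires comparing $\sum_i\E X\nix^2$ to $\gss_n$. If you want to stay with blocks, you would first have to prove a replacement for \eqref{cond+} from the Lindeberg condition alone, and the example shows no such replacement exists; so as it stands your argument reproves Orey's theorem but not \refT{Tm}.
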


\begin{remark}
  The case $m=0$ of \refT{Tm}, \ie, an independent array $(X_{ni})$,
is the classical central limit theorem with Lindeberg's condition; see \eg{}
\cite[Theorem XV.6.1 and Problem XV.29]{FellerII},
\cite[Theorem 5.12]{Kallenberg} or \cite[Theorem 7.2.4]{Gut}.  
 Moreover, in this case the Lindeberg condition \eqref{tmL} in necessary
 under a weak extra condition, see
\cite[Theorem XV.6.2]{FellerII} and  \cite[Theorem 5.12]{Kallenberg};
hence we cannot expect a more general theorem for \mdep{} variables 
without this condition (or something stronger).
\end{remark}

\refT{Tm} is only a minor generalization of the result by \citet{Orey},
where the main theorem essentially (ignoring some technical details)
shows the same result under the extra condition
\begin{align}\label{cond+}
  \sumiln\Var X\nix = O\bigpar{\gss_n}.
\end{align}
(See also \cite[Theorem 13.1]{Rosen} which gives another proof of 
Orey's result, now stated similarly to our
\refT{Tm} with the extra condition \eqref{cond+}.)
The condition \eqref{cond+}
is satisfied in most applications, but it is easy to see that
there are cases where \eqref{cond+} does not hold but \refT{Tm} applies,
see \refE{Econd+}.

Note also that this result by \citet{Orey} 
for \mdep{} variables extends to much more mixing conditions.
As shown by \citet[Theorem 2.1]{Peligrad}, \refT{Tm} holds also if we
replace ``\mdep'' by ``strongly mixing'', and add \eqref{cond+} and the
condition $\lim\bar\rho_n^*<1$. (See \cite{Peligrad} for definition, and note
that in the \mdep{} case this is trivial since then $\bar\rho^*_n=0$ when
$n>m$.) 
We will not consider mixing conditions further, but we state a problem.
(There is a large literature on asymptotic normality under various mixing
conditions. See \eg{} 
\cite{Bradley}, which however mainly considers only the
case of stationary sequences, and the references there.)

\begin{problem}
  Does \refT{Tm} extend to suitable mixing conditions?
In particular, does
\cite[Theorem 2.1]{Peligrad} hold also without the assumption \eqref{cond+}?
\end{problem}

More generally, we can allow $m$ to depend on $n$.
(Then mixing results such as
\cite{Peligrad} do not apply. 
However, more complicated results such as \cite{Rio} 
may apply in this case, see \refR{RRio}.)
The statement is almost the same in this case; we only have to
modify the Lindeberg condition.

\begin{theorem}
  \label{Tmn}
Let $(m_n)_n$ be a given sequence of integers with $m_n\ge1$. 
Suppose that $(X\nix)_{n\ge1,1\le i\le\Nn}$ is a \mndep{} triangular array
and denote its row sums by $S_n:=\sumiln X\nix$.
Suppose further that the variables $X\nix$
have finite second moments and
$\E X\nix=0$. % $\forall n,i$.
Let
\begin{align}\label{tmngss}
  \gss_n:=\Var S_n % \in\ooo
,\end{align}
and assume that $\gss_n>0$ for all large $n$.
Finally, assume the following  version of the Lindeberg condition:
for every $\eps>0$, as \ntoo,
\begin{align}\label{tmnL}
\frac{ m_n}{\gss_n} \sumiln
%  \E\bigsqpar{X\nix^2\indic{|X\nix|>\eps\gs_n/m_n}} 
  \E\Bigsqpar{X\nix^2\Bigindic{|X\nix|>\frac{\eps\gs_n}{m_n}}} 
\to0.
\end{align}
Then
\begin{align}\label{tmnclt}
  S_n/\gs_n\dto N(0,1)
\qquad\asntoo.
\end{align}
\end{theorem}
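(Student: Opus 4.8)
The plan is to deduce \refT{Tmn} from the fixed-$m$ result \refT{Tm}, used with $m=1$. The device is to group the variables of each row into consecutive blocks of length $\mn$: the array of block sums is then $1$-dependent, has the same row sums $S_n$ (hence the same variances $\gss_n$), and — thanks to the particular scaling built into \eqref{tmnL} — will turn out to satisfy the \emph{ordinary} Lindeberg condition. Checking that last point is where essentially all the work lies.

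Concretely, for each $n$ put $\tN_n:=\ceil{\Nn/\mn}$ and, for $1\le k\le\tN_n$, let $I\nk:=\set{(k-1)\mn+1,\dots,k\mn}\cap\set{1,\dots,\Nn}$ and $\tX\nk:=\sum_{i\in I\nk}X\nix$, so that $|I\nk|\le\mn$ and $\sum_k\tX\nk=S_n$. First I would dispatch the routine facts: the sets $I\nk$ partition $\set{1,\dots,\Nn}$, so $\E\tX\nk=0$, $\tX\nk\in L^2$, and $\Var\bigpar{\sum_k\tX\nk}=\gss_n>0$ for large $n$; and $(\tX\nk)$ is $1$-dependent, since whenever $k'\ge k+2$ every $i\in I\nk$ and $i'\in I_{nk'}$ satisfy $i'-i>\mn$, so, as $(X\nix)$ is \mndep, the families $(\tX_{n1},\dots,\tX\nk)$ and $(\tX_{n,k+2},\tX_{n,k+3},\dots)$ are independent for every $k$.

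The substantive step is to verify the Lindeberg condition \eqref{tmL} for the array $(\tX\nk)$, that is, $\frac1{\gss_n}\sum_k\E\bigsqpar{\tX\nk^2\indic{|\tX\nk|>\eps\gs_n}}\to0$ for each $\eps>0$. I would fix $\eps>0$, split $X\nix=X'\nix+X''\nix$ with $X''\nix:=X\nix\indic{|X\nix|>\eps\gs_n/(2\mn)}$, and take the corresponding decomposition $\tX\nk=\tX'\nk+\tX''\nk$. Two elementary bounds do the job. Since $|I\nk|\le\mn$ and each $|X'\nix|\le\eps\gs_n/(2\mn)$, we get $|\tX'\nk|\le\eps\gs_n/2$; and the \CSineq{} gives $(\tX''\nk)^2\le\mn\sum_{i\in I\nk}(X''\nix)^2$. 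On the event $\set{|\tX\nk|>\eps\gs_n}$ the first bound forces $|\tX''\nk|\ge|\tX\nk|-\eps\gs_n/2>|\tX\nk|/2$, hence $\tX\nk^2<4(\tX''\nk)^2$ there; summing over $k$ and invoking the second bound,
\begin{align*}
\frac1{\gss_n}\sum_k\E\bigsqpar{\tX\nk^2\indic{|\tX\nk|>\eps\gs_n}}
&\le\frac4{\gss_n}\sum_k\E\bigsqpar{(\tX''\nk)^2}\\
&\le\frac{4\mn}{\gss_n}\sumiln\E\Bigsqpar{X\nix^2\Bigindic{|X\nix|>\tfrac{\eps\gs_n}{2\mn}}},
\end{align*}
and the last expression tends to $0$ by \eqref{tmnL} applied with $\eps/2$ in place of $\eps$.

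Once this is in hand, all hypotheses of \refT{Tm} (with $m=1$, applied to the array $(\tX\nk)$) hold, so that theorem yields $S_n/\gs_n=\bigpar{\sum_k\tX\nk}/\gs_n\dto N(0,1)$, which is \eqref{tmnclt}. The only genuinely nontrivial point — essentially the whole idea — is recognising that blocks of length \emph{exactly} $\mn$ are the right choice: they render the grouped array $1$-dependent, while the single factor $\mn$ and the threshold $\eps\gs_n/\mn$ in \eqref{tmnL} are precisely calibrated to absorb the factor $\mn$ produced by the \CSineq{} together with the fact that a block of $\mn$ summands, each below that threshold, has absolute sum at most $\eps\gs_n$. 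Everything else is bookkeeping; in particular the degenerate case $\mn\ge\Nn$ is not special, since then $\tN_n=1$, $|I_{n1}|=\Nn\le\mn$, and the same computation applies.
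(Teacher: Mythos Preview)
Your argument is correct and rather elegant: the block array $(\tX\nk)$ is indeed $1$-dependent, and your truncation-plus-\CS{} bound cleanly converts the modified Lindeberg condition \eqref{tmnL} into the ordinary one \eqref{tmL} for the blocks. The computation on the event $\set{|\tX\nk|>\eps\gs_n}$ is fine: from $|\tX'\nk|\le\eps\gs_n/2<|\tX\nk|/2$ you get $|\tX''\nk|>|\tX\nk|/2$, hence $\tX\nk^2<4(\tX''\nk)^2$, and the rest follows.

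However, your route is genuinely different from the paper's, and in the paper's logical structure it is circular as written. The paper does \emph{not} prove \refT{Tm} independently and then deduce \refT{Tmn}; it does the opposite. The main work is \refL{Lmn}, which treats the general $(m_n)$-dependent case directly by approximating the partial sums by a martingale $M_{nk}=\E(S_n\mid\cF_{nk})$ and applying a martingale CLT, under a uniform smallness assumption $|X\nix|\le\eps_n/m_n$; \refT{Tmn} then follows by a truncation, and \refT{Tm} is recorded afterwards as the special case $m_n\equiv m$. So invoking \refT{Tm} to prove \refT{Tmn} closes a loop.

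That said, the circularity is easily broken: your reduction shows that \refT{Tmn} is equivalent to the case $m=1$ of \refT{Tm}, so one only needs an independent proof of that case. The paper's own martingale argument (\refL{Lmn} specialised to $m_n\equiv1$, followed by the same truncation) supplies exactly this. What your approach buys is modularity and a conceptual explanation of the form of \eqref{tmnL}: it is precisely the condition that survives blocking into groups of size $m_n$. What the paper's approach buys is a single self-contained proof that handles varying $m_n$ in one pass, without relying on any prior $m$-dependent CLT. Note that you cannot simply cite the older literature (\eg{} Orey) for the $m=1$ case, since those results assume the extra condition \eqref{cond+}, which your block array need not satisfy.
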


\begin{remark}\label{Rm1}
The assumption $m_n\ge1$ in \refT{Tmn}
is just for convenience. (Otherwise we would have
to replace $m_n$ by $m_n+1$ or $m_n\vee1$ in \eqref{tmnL}.)
It is no real loss of generality, since we may replace any $m_n=0$ by 1.
It is then obvious that \refT{Tm} is a special case of \refT{Tmn}.
\end{remark}

We will see in \refE{EL} that \eqref{tmnL} is the natural version of the
Lindeberg condition when $m$ is allowed to depend on $n$, and that it cannot
be weakened. In particular, \eqref{tmL} is not  enough if $m_n\to\infty$.

As immediate corollaries, the Lindeberg conditions \eqref{tmL} and \eqref{tmnL}
can be replaced by corresponding \Lyap{} conditions; se \refS{SLyap}.
We will also compare this to the results of \cite{Berk} and \cite{RomanoWolf},
and in particular show that their main results follow from \refT{Tmn}
and that some of their conditions are not needed.

\begin{remark}
  There is a large number of papers on various aspects of limits for \mdep{}
  random variables not discussed here.
In particular, we mention results on rate of convergence and Berry--Essen
type estimates, see for example
\cite{Petrov,
Heinrich,
Shergin,
ChenShao-local}.
\end{remark}

\section{Notation}\label{Snot}

We recall some standard notions, and give our notation for them.

Let $m\ge0$ be an integer.
A (finite or infinite) sequence $(X_i)_i$ of random variables is
\emph{\mdep} if the 
two families $\set{X_i}_{i\le k}$ and $\set{X_i}_{i> k+m}$ of random
variables are independent of each other for every $k$. 
In particular, 0-dependent is the same as independent.
%; thus the classical independent case is included as the special case $m=0$
%below. 

A \emph{triangular array} is an array 
of random variables
$\xpar{X\nix}_{n\ge1,\, 1\le i\le \Nn}$, 
%$(X\nix)$, $n\ge1$, $1\le i\le \Nn$, 
for some given sequence $\Nn\ge1$;
it is assumed that the variables $(X\nix)_i$ in a single row are defined on
the same probability space. (No relation is required
between variables in different rows.)

The row lengths $\Nn$ are supposed to be given; we often omit them from
the notation and write \eg{} $\sum_i X\nix$ for the row sum 
$\sum_{i=1}^{\Nn}X\nix$. 
%(The values of $\Nn$ are typically do not affect the results.)

If $m\ge0$ is a fixed integer, we say that the triangular array $(X\nix)$ is
\emph{\mdep} if each row $(X\nix)_i$ is \mdep.
More generally, given a sequence $(m_n)\xoo$ with $m_n\ge0$, 
we say that $(X\nix)$ is \emph{\mndep} if, for every $n\ge1$, 
the row $(X\nix)_i$ is $m_n$-dependent.

For a random variable $X$, $\normm{X}:=\bigpar{\E\sqpar{X^2}}\qq$.

Convergence in probability and distribution is denoted by $\pto$ and $\dto$,
respectively.
Unspecified limits are as \ntoo.

\section{Proof of \refTs{Tm} and \ref{Tmn}}\label{Spf}

We begin with a special case of \refT{Tmn}.
The general case will then follow by a simple truncation argument.

\begin{lemma}\label{Lmn}
    In addition to the assumptions in \refT{Tmn},
assume also that $\gss_n\to1$ as \ntoo, and that $(\eps_n)_n$ is a sequence
with $\eps_n\to0$ such that
\begin{align}\label{lmn0}
  |X\nix|\le \eps_n/m_n 
\qquad\text{a.s.}, 
\end{align}
for  all $n$ and $i$.
Then
\begin{align}\label{lmn2}
  S_n\dto N(0,1)
\qquad\asntoo.
\end{align}
\end{lemma}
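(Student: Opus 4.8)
The plan is to use the method of characteristic functions, combined with a blocking argument adapted to $m_n$-dependence. First I would group the indices $1,\dots,\Nn$ into consecutive blocks of length roughly $m_n$ (or a small multiple of it), forming ``big blocks'' $B_1,B_2,\dots$ separated by ``small blocks'' of length exactly $m_n$; writing $Y_k$ for the sum of the $X\nix$ over the $k$-th big block and $Z_k$ for the sum over the $k$-th small block, we have $S_n=\sum_k Y_k+\sum_k Z_k$. The small-block contribution $\sum_k Z_k$ should be negligible in $L^2$: because of the bound \eqref{lmn0} each $|X\nix|\le\eps_n/m_n$, so each small block has at most $m_n$ terms each of size $O(\eps_n/m_n)$, and after summing variances (using $m_n$-dependence to control cross terms within a window of width $m_n$) one finds $\Var\bigpar{\sum_k Z_k}=o(1)$ provided the big blocks are chosen long enough relative to the small ones; here one also uses $\gss_n\to1$ to know the total variance is bounded. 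By Slutsky it then suffices to prove asymptotic normality of $\sum_k Y_k$.

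Next, the key point is that consecutive big blocks are \emph{not} independent, but blocks at distance $\ge 2$ are, since they are separated by a gap of length $\ge m_n$; more precisely, $Y_k$ depends only on variables in an index range, and $Y_k,Y_{k'}$ are independent once the gap between their ranges exceeds $m_n$, which holds whenever $|k-k'|\ge2$. So the $Y_k$ form a $1$-dependent sequence. I would then estimate the characteristic function $\E e^{\ii t S_n'}$, where $S_n'=\sum_k Y_k$, by the standard telescoping/Bergström-type expansion: comparing $\prod_k \E e^{\ii t Y_k}$ with $\E e^{\ii t S_n'}$ term by term, the error introduced at each step is controlled by the covariance-type quantity coming from $1$-dependence, namely something like $|t|^3$ times third-moment contributions of two adjacent blocks, i.e.\ $\sum_k \E|Y_k|^2\cdot\max_k\norm{Y_k}_\infty$ up to constants. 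Using \eqref{lmn0}, each big block of length $O(m_n)$ has $\norm{Y_k}_\infty=O(\eps_n)$, and $\sum_k\E Y_k^2$ stays bounded (again by $\gss_n\to1$ plus a covariance bound), so the total telescoping error is $O(\eps_n\cdot|t|^3)=o(1)$ for each fixed $t$.

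It then remains to show $\prod_k \E e^{\ii t Y_k}\to e^{-t^2/2}$, equivalently $\sum_k\bigpar{1-\E e^{\ii t Y_k}}\to t^2/2$. Using $\E Y_k=0$ and the elementary bound $|e^{\ii x}-1-\ii x+x^2/2|\le |x|^3/6$, we get $1-\E e^{\ii t Y_k}=\tfrac{t^2}{2}\E Y_k^2+O(|t|^3\E|Y_k|^3)$, and $\E|Y_k|^3\le \norm{Y_k}_\infty\E Y_k^2=O(\eps_n)\E Y_k^2$, so $\sum_k|t|^3\E|Y_k|^3=o(1)$; thus the problem reduces to $\sum_k\E Y_k^2\to1$. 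But $\sum_k\E Y_k^2=\Var S_n-\Var\bigpar{\sum_k Z_k}-2\Cov\bigpar{\sum_k Y_k,\sum_k Z_k}+(\text{cross-covariances among the }Y_k)$, and all the correction terms are $o(1)$: the $Z$-variance was already shown negligible, the $Y$--$Z$ covariance is $o(1)$ by \CSineq, and the covariances between distinct $Y_k$ (only adjacent ones are nonzero) sum to something negligible by the same $L^\infty$/variance trade-off; combined with $\Var S_n=\gss_n\to1$ this gives $\sum_k\E Y_k^2\to1$. Putting these together yields $\E e^{\ii t S_n}\to e^{-t^2/2}$ for every $t$, which is \eqref{lmn2}.

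The main obstacle I anticipate is the bookkeeping in choosing the block lengths so that \emph{all} error terms vanish simultaneously: the small blocks must have length exactly $m_n$ to ensure $1$-dependence of the big blocks, but then the big blocks must be long enough that the total small-block variance $o(1)$ — yet not so long that $\max_k\norm{Y_k}_\infty$ fails to be $o(1)$, which forces the big-block length to be $o(m_n/\eps_n)$ or so. Reconciling these (together with controlling how many blocks there are, which affects the telescoping sum) is the delicate part; everything else is a routine characteristic-function estimate.
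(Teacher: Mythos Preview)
Your blocking approach has a genuine gap that cannot be repaired by bookkeeping. The claims that $\sum_k Z_k$ is negligible in $L^2$ and that $\sum_k \E Y_k^2$ stays bounded (let alone $\to 1$) both implicitly rely on a bound of the form $\sum_i \Var X\nix = O(\gss_n)$, i.e.\ condition \eqref{cond+} in the paper, which is precisely what \refL{Lmn} does \emph{not} assume and whose removal is the point of the lemma. Concretely, take \refE{Econd+} with any $0<\ga\le 1/3$: here $m_n=1$, $\Nn=n$, $X\nix = n\qqw\xi_i + n^{-\ga}(\eta_i-\eta_{i-1})$, so $|X\nix|\le 3n^{-\ga}=:\eps_n\to0$ and $\gss_n\to 1$, and the hypotheses of the lemma hold. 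With big blocks of length $L_n$ and small blocks of length $1$, the $\eta$-terms telescope inside each block but not across block boundaries, giving $\Var Y_k = L_n/n + 2n^{-2\ga}$ and $\Var Z_k\sim 2n^{-2\ga}$; with $K_n$ of order $n/L_n$ blocks one gets $\Var\bigpar{\sum_k Z_k}$ and $\sum_k\E Y_k^2-1$ both of order $n^{1-2\ga}/L_n$. Making this $o(1)$ requires $L_n\gg n^{1-2\ga}$, while $\norm{Y_k}_\infty\le 3L_n n^{-\ga}=o(1)$ forces $L_n=o(n^\ga)$; for $\ga\le 1/3$ these are incompatible. So no block length works: $\sum_k Z_k$ fails to vanish, $\sum_k\E Y_k^2\to\infty$, and your telescoping and variance estimates both break down. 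The cancellation that makes $\gss_n\to 1$ despite $\sum_i\Var X\nix\to\infty$ is destroyed the moment you cut the sum into blocks; what you have sketched is essentially Orey's theorem under \eqref{cond+}, not the present lemma.

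The paper avoids this entirely by replacing blocking with a martingale approximation: set $\cF_{nk}$ to be the \gsf{} generated by $X_{n1},\dots,X_{nk}$ and $M_{nk}:=\E(S_n\mid\cF_{nk})$, so $M_{n\Nn}=S_n$, and apply a martingale CLT (Hall--Heyde). By $m_n$-dependence only indices $i\in[k,k+m_n]$ contribute to the difference $\gD M_{nk}$, whence $|\gD M_{nk}|\le 4\eps_n$ directly from \eqref{lmn0}. The substantive step is $\sum_k\gD M_{nk}^2\pto1$, proved by showing $\Var\bigpar{\sum_k\gD M_{nk}^2}=O(\eps_n^2\gss_n)\to0$; this computation uses orthogonality of martingale differences together with $m_n$-dependence, and never requires any control on $\sum_i\Var X\nix$, which is exactly why it succeeds without \eqref{cond+}.
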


\begin{proof}
The idea of the proof is to approximate, for each $n$,  
the sequence of partial sums $\sum_{i=1}^kX_{ni}$
by a martingale $(M_{nk})_{k=0}^\Nn$ with $M_{n0}=0$ and $M_{n\Nn}=S_n$,
see \eqref{tm6} below,
and then use a martingale central limit theorem for $M_{nk}$.
(Note that in the independent case, the sequence of partial sums is a
martingale, but in the \mdep{} case it is in general not; the proof
shows that the martingale \eqref{tm6} is a good approximation.)

The martingale limit theorem that we use is
\cite[Theorem 3.2 with Remarks, pp.~58--59]{HH}, which shows that the
conclusion \eqref{lmn2} follows provided we show that, with 
$\gDM_{nk}:=M_{n,k}-M_{n,k-1}$,
\begin{align}
  \max_k|\gDM_{nk}|&\pto0, \label{HH1}\\
  \sum_k\gDM_{nk}^2&\pto1, \label{HH2}\\
  \E\bigsqpar{\max_k\gDM_{nk}^2}&\le C. \label{HH3}
\end{align}

%The details in our verification of \eqref{HH1}--\eqref{HH3}
%are, unfortunately, somewhat involved, and 
We separate the proof into several steps.
For notational convenience, we define $X\nix:=0$ for $i\le0$ and $i>\Nn$.

\stepx{The martingale.} 
Let $\cF_{nk}$ be the \gsf{} generated by $X_{n1},\dots,X_{nk}$, and define
\begin{align}
W_{nik}&:=\E\bigpar{X_{ni}\mid \cF_{nk}},\label{tmw}
\\
  M_{nk}&:=\E\bigpar{S_n\mid \cF_{nk}}
=\sum_i W_{nik}
.\label{tm6}
\end{align}
Thus $(M_{nk})_{k=0}^\Nn$ is a martingale for each $n$, with $M_{n0}=\E
S_n=0$,
$M_{n\Nn}=S_n$,
and martingale differences
\begin{align}\label{tm7}
  \gDM_{nk}:=M_{n,k}-M_{n,k-1}
%=\sum_i \bigpar{\E\xpar{X_{ni}\mid \cF_{nk}}-\E\xpar{X_{ni}\mid  \cF_{n,k-1}}}.
=\sum_i \bigpar{W_{ni,k}-W_{ni,k-1}}.
\end{align}
If $i\le k$, then $X_{ni}$ is $\cF_{nk}$-measurable, and thus
\begin{align}
  \label{WX}
W_{nik}=X_{ni}, 
\qquad i\le k.
\end{align}
In particular, if $i\le k-1$, then
$W_{ni,k}-W_{ni,k-1}=X_{ni}-X_{ni}=0$.
Furthermore, if $i>k+m$, then the $m$-dependence shows that
$X_{ni}$ is independent of $\cF_{nk}$, and thus
\begin{align}
  \label{kab}
W_{nik}=\E\xpar{X_{ni}\mid \cF_{nk}}= \E X_{ni}=0,
\qquad i>k+m.
\end{align}
Hence, \eqref{tm7} simplifies to
\begin{align}\label{az1}
  \gDM_{nk}
%=\sum_{i=k}^{k+m}
%\bigpar{\E\xpar{X_{ni}\mid \cF_{nk}}-\E\xpar{X_{ni}\mid \cF_{n,k-1}}}
=\sum_{i=k}^{k+m}
\bigpar{W_{ni,k}-W_{ni,k-1}}
.\end{align}

Similarly, by \eqref{WX} and \eqref{kab} again,
\begin{align}
  \label{axel}
M_{nk}
%=\sum_i \E\bigpar{X\nix\mid\cF_k}
=\sum_{i=1}^k X\nix + \sum_{i=k+1}^{k+m}W_{nik}
%=\sum_{i=1}^k X\nix + \sum_{i=k+1}^{k+m}\E \bigpar{X\nix\mid\cF_k}
.\end{align}

We have also, by the martingale property and $M_{n\Nn}=S_n$,
\begin{align}\label{per}
\E\sum_k \gDM_{nk}^2
=
  \sum_k\E \gDM_{nk}^2
=\E M_{n\Nn}^2=   \E S_n^2
%\to\gss
.\end{align}

\stepx{Proof of \eqref{HH1} and \eqref{HH3}.}
The assumption \eqref{lmn0} %$|X\nix|\le\eps_n/m_n$ 
and \eqref{tmw} yield
\begin{align}\label{az0}
|W_{nik}|\le \xfrac{\eps_n}{m_n}
\qquad\text{a.s.}
\end{align}
There are $2(m_n+1)$ variables $W$ in the sum in \eqref{az1}, and thus
\eqref{az0} yields
\begin{align}\label{paj2}
  |\gDM_{nk}| \le 2(m_n+1) \frac{\eps_n}{m_n}
\le 4\eps_n
\qquad\text{a.s.}
\end{align}
Since $\eps_n\to0$,
both \eqref{HH1} and \eqref{HH3} follow (trivially) from
\eqref{paj2}.

\stepx{Proof of \eqref{HH2}.}
Let
\begin{align}\label{paj3}
Q_n&:=\sum_k \gDM\nk^2,&
q\nk&:=\E\gDM\nk^2, &
T\nk&:=\sum_{i=1}^k X\nix.  
\end{align}
Then $\E Q_n=\E S_n^2$ by \eqref{per}. Furthermore,
\begin{align}
  \label{paj4}
\Var Q_n &=
\E\lrsqpar{\sumiln \bigpar{\gDM\nix^2-q\nix}}^2
\notag\\&
=
\sumiln\E \bigsqpar{\bigpar{\gDM\nix^2-q\nix}^2}
+
2\sumiln\sumix{j}\E \bigsqpar{ \bigpar{\gDM\nix^2-q\nix}\gDM\nj^2}
.\end{align}
First, using \eqref{paj2} and \eqref{per},
\begin{align}\label{paj5}
  \sum_i\E \bigsqpar{\bigpar{\gDM\nix^2-q\nix}^2}
&=\sum_i\Var\bigsqpar{\gDM\nix^2}
\le\sum_i\E\bigsqpar{\gDM\nix^4}
\le 16\eps_n^2\sum_i\E{\gDM\nix^2}
%\notag\\&
=16\eps_n^2\E S_n^2
.\end{align}
For the double sum in \eqref{paj4}, we note that since $\gDM\nix$ are
martingale differences, if $i<j<k$, then
$\E \bigsqpar{ \bigpar{\gDM\nix^2-q\nix}\gDM\nj\gDM\nk}=0$; by symmetry, the
same holds if $i<k<j$. Hence,
\begin{align}
  \label{paj6}
\sumiln\sumix{j}\E \bigsqpar{ \bigpar{\gDM\nix^2-q\nix}\gDM\nj^2}
&=
\sumiln\sumix{j}\sumix{k}\E \bigsqpar{ \bigpar{\gDM\nix^2-q\nix}\gDM\nj\gDM\nk}
\notag\\&
=
\sumiln\E\sumix{j}\sumix{k} { \bigpar{\gDM\nix^2-q\nix}\gDM\nj\gDM\nk}
\notag\\&
=\sum_i\E\bigsqpar{ \bigpar{\gDM\nix^2-q\nix}\bigpar{M_{n\Nn}-M\nix}^2}
.\end{align}
Recall that $M_{n\Nn}=S_n=T_{n\Nn}$. The conjugate rule gives
\begin{align}
  \label{paj7}
&\bigpar{M_{n\Nn}-M\nix}^2
=
\bigpar{T_{n\Nn}-M\nix}^2
\notag\\&\qquad
=\bigpar{T_{n\Nn}-T_{n,i+\mn}}^2+\bigpar{2T_{n\Nn}-T_{n,i+\mn}-M\nix}\bigpar{T_{n,i+\mn}-M\nix}
.\end{align}
Hence,
\begin{multline}
  \label{paj8}
\E\bigsqpar{ \bigpar{\gDM\nix^2-q\nix}\bigpar{M_{n\Nn}-M\nix}^2}
%\notag\\&
=\E\bigsqpar{ \bigpar{\gDM\nix^2-q\nix}\bigpar{T_{n\Nn}-T_{n,i+\mn}}^2}
\\ %\notag\\&
+\E\bigsqpar{\bigpar{\gDM\nix^2-q\nix}
 \bigpar{T_{n,i+\mn}-M\nix}
\bigpar{2T_{n\Nn}-T_{n,i+\mn}-M\nix}}
.\end{multline}
For the first term on the \rhs{} of \eqref{paj8}, we note that $\gD_{ni}$
is $\cF_i$-measurable, and thus the $m$-dependence of $(X\nix)_i$ implies
that $T_{n\Nn}-T_{n,i+\mn}=\sum_{i+\mn+1}^\Nn X_{nk}$ is independent of 
$\gD_{ni}^2-q\nix$. Furthermore, 
$\E\xsqpar{ \gDM\nix^2-q\nix}=0$, and thus
\begin{align}\label{paj9}
\E\bigsqpar{ \bigpar{\gDM\nix^2-q\nix}\bigpar{T_{n\Nn}-T_{n,i+\mn}}^2}
=
\E\bigsqpar{ \gDM\nix^2-q\nix}\E\bigsqpar{\bigpar{T_{n\Nn}-T_{n,i+\mn}}^2}
=0.
\end{align}
Similarly, $T_{n\Nn}-T_{n,i+2\mn}$ is independent  of
$\bigpar{\gDM\nix^2-q\nix} \bigpar{T_{n,i+\mn}-M\nix}$,
and $\E\bigpar{T_{n\Nn}-T_{n,i+2\mn}}=0$;
hence,
\begin{align}\label{paj10}
\E\bigsqpar{\bigpar{\gDM\nix^2-q\nix}
 \bigpar{T_{n,i+\mn}-M\nix}
\bigpar{2T_{n\Nn}-2T_{n,i+2\mn}}}=0.
\end{align}
Consequently, we obtain from \eqref{paj8}--\eqref{paj10}
\begin{multline}
    \label{paj11}
\E\bigsqpar{ \bigpar{\gDM\nix^2-q\nix}\bigpar{M_{n\Nn}-M\nix}^2}
\\ %\notag\\&
=\E\bigsqpar{\bigpar{\gDM\nix^2-q\nix}
 \bigpar{T_{n,i+\mn}-M\nix}
\bigpar{2T_{n,i+2\mn}-T_{n,i+\mn}-M\nix}}
.\end{multline}
The assumption \eqref{lmn0} implies $|T_{n,i+2\mn}-T_{n,i+\mn}|\le \eps_n$,
and also, using \eqref{axel} and \eqref{az0},
\begin{align}\label{paj12}
  |T_{n,i+\mn}-M\nix|
%=\lrabs{ \sum_{j=i+1}^{i+\mn}\bigpar{X\nj-\E\bigpar{X\nj\mid\cF_{ni}}}}
=\lrabs{ \sum_{j=i+1}^{i+\mn}\bigpar{X\nj-W_{nji}}}
\le 2\eps_n.
\end{align}
Hence,
\begin{align}\label{paj12+}
\bigabs{2T_{n,i+2\mn}-T_{n,i+\mn}-M\nix}
\le 2|T_{n,i+2\mn}-T_{n,i+\mn}|+  |T_{n,i+\mn}-M\nix|
\le 4\eps_n, 
\end{align}
and
\eqref{paj11}--\eqref{paj12+} yield
\begin{align}
    \label{paj13}
\E\bigsqpar{ \bigpar{\gDM\nix^2-q\nix}\bigpar{M_{n\Nn}-M\nix}^2}
%\notag\\&
\le 8\eps_n^2 \E\bigabs{\gDM\nix^2-q\nix}
\le 16\eps_n^2 \E \gDM\nix^2
.\end{align}
Combining \eqref{paj6} %\eqref{paj8}, \eqref{paj9} 
and \eqref{paj13} yields, using again \eqref{per},
\begin{align}
  \label{paj16}
\sumiln\sumix{j}\E \bigsqpar{ \bigpar{\gDM\nix^2-q\nix}\gDM\nj^2}
&\le
 16\eps_n^2 \sum_i\E \gDM\nix^2
=16\eps_n^2\E S_n^2.
\end{align}
Finally, %\eqref{paj3}, 
\eqref{paj4}, \eqref{paj5} and \eqref{paj16} 
yield the estimate
%assuming \eqref{paj1},
\begin{align}
  \label{paj}
%\E\biggpar{\sum_k\gDM\nk^2 -\E S_n^2}^2=
\Var \bigsqpar{Q_n}
\le 48\eps_n^2\E S_n^2
=48\eps_n^2\gss_n
\to0,
\end{align}
recalling $\eps_n\to0$ and $\gss_n\to1$.

Consequently,
$Q_n-\E Q_n\pto0$.
Since $\E Q_n=\E S_n=\gss_n\to1$ by \eqref{per} and assumption,
we obtain
\begin{align}
  Q_n\pto1,
\end{align}
which is \eqref{HH2}.
(Recall the definition \eqref{paj3}.)

\stepx{Conclusion.}
We have verified \eqref{HH1}--\eqref{HH3}, and, as said above, 
the asymptotic normality \eqref{lmn2} of $S_n=M_{n\Nn}$ follows by 
\cite[Theorem 3.2 with Remarks, pp.~58--59]{HH}.
\end{proof}

\begin{proof}[Proof of \refT{Tmn}]
First, by replacing $X\nix$ by $X\nix/\gs_n$
(possibly ignoring some small $n$ with $\gs_n=0$), we may and will assume that
$\gs_n=1$ for all $n$.

Next, since \eqref{tmnL} holds for every fixed $\eps>0$, 
it holds also for some sequence $\eps_n\to0$; i.e., 
there exists a sequence $\eps_n\to0$ such that
\begin{align}\label{ma1}
 m_n \sumiln \E\bigsqpar{X\nix^2\indic{|X\nix|>\eps_n/m_n}} \to0.
\end{align}
We fix such a sequence $\eps_n$, and use it to truncate the variables:
 define
\begin{align}\label{sw1}
  X'\nix:=X\nix\indic{|X\nix|\le\eps_n/m_n}-\yy\nix,
&&
  X''\nix:=X\nix\indic{|X\nix|>\eps_n/m_n}+\yy\nix,
\end{align}
where
\begin{align}\label{sw2}
\yy\nix:=\E\bigsqpar{X\nix\indic{|X\nix|\le\eps_n/m_n}}
=-\E\bigsqpar{X\nix\indic{|X\nix|>\eps_n/m_n}}
.\end{align}
Clearly, both $(X'\nix)_{n,i}$ and $(X''\nix)_{n,i}$ are 
triangular arrays with \mndep{} rows and means 0.
Denote the corresponding row sums by $S'_n$ and $S''_n$.
%construct variables $M'\nk$, $M''\nk$, and so on as above from $X'\nix$
%and $X''\nix$, adding ${}'$ or ${}''$, respectively, to the notation.
%We have, as \ntoo,
%\begin{align}\label{sw3}
%  \sumin\E |X''\nix|^2
%\le
%\sumin\E\bigsqpar{X\nix^2\indic{|X\nix|>\eps_n/m_n}}
%=o(1)
%\end{align}

We will estimate $\E(S_n'')^2$ in \eqref{ma4} below; this is an instance of 
an  estimate in \cite[Lemma 13.1]{Rosen}, but
for completeness we include the simple proof.
For any two square-integrable random variables $Y$ and $Z$, we have by the
\CSineq{} and the arithmetic-geometric inequality
\begin{align}\label{ma2}
 | \Cov(Y,Z)|
\le \bigpar{\E\sqpar{Y^2}\E\sqpar{Z^2}}\qq
\le\tfrac12 \bigpar{\E\sqpar{Y^2}+\E\sqpar{Z^2}}.
\end{align}
Hence,  by \eqref{sw1}--\eqref{sw2},
for convenience again defining $X\nix:=0$ for $i\le0$ and $i>\Nn$,
\begin{align}\label{ma4}
  \E\bigsqpar{(S_n'')^2}&
=\sum_{i,j}\Cov\bigpar{X''_{ni},X''_{nj}}
=\sum_{i}\sum_{j=i-\mn}^{i+\mn}\Cov\bigpar{X''_{ni},X''_{nj}}
\notag\\&
\le \sum_{i}\sum_{j=i-\mn}^{i+\mn}\tfrac12
\bigpar{\E \xsqpar{|X''_{ni}|^2}+\E \xsqpar{|X''_{nj}|^2}}
\le(2\mn+1)\sum_i \E \xsqpar{|X''_{ni}|^2}
\notag\\&
\le (2\mn+1) \sum_i\E\bigsqpar{X\nix^2\indic{|X\nix|>\eps_n/m_n}}.
\end{align}
Consequently, \eqref{ma1} implies
\begin{align}\label{ma5}
    \E\bigsqpar{(S_n'')^2} \to0.
\end{align}

In other words, $\normm{S''_n}\to0$, and since
$\normm{S_n}=\gs_n=1$ by assumption
%\eqref{tm1} 
(recalling  $\E S_n=0$), we obtain from
Minkowski's inequality 
$\normm{S_n'}=\normm{S_n-S''_n}\to1$,
and thus 
\begin{align}
  \Var S_n'=\E\bigsqpar{(S_n')^2}=\normm{S_n'}^2\to1.
\end{align}

Furthermore, \eqref{sw1}--\eqref{sw2} imply
$|X'\nix|\le 2\eps_n/m_n$.
Consequently, \refL{Lmn} applies to $(X'\nix)$ (with $2\eps_n$),
which yields
\begin{align}
  S_n'\dto N(0,1).
\end{align}
Since also $S''_n\pto0$ by \eqref{ma5}, 
the conclusion
\eqref{tmnclt} follows by the Cram\'er--Slutsky theorem
\cite[Theorem 5.11.4]{Gut}.
\end{proof}

\begin{proof}[Proof of \refT{Tm}]
\refT{Tm} is, as said in \refR{Rm1}, a special case of \refT{Tmn}.  
\end{proof}

\section{\Lyap{} conditions}\label{SLyap}

It is an immediate corollary of \refTs{Tm} and \ref{Tmn} that
instead of the Lindeberg conditions \eqref{tmL} and \eqref{tmnL}, we may use
a \Lyap{} type condition.
This is often more convenient for applications.
We state such a version of \refT{Tmn}.

\begin{theorem}
  \label{TmnLy}
%Let $(m_n)_n$ be a given sequence of integers with $m_n\ge1$. 
Suppose that $(X\nix)_{n\ge1,1\le i\le\Nn}$ is a \mndep{} triangular array
with $\E X\nix=0$.
Let $S_n:=\sumiln X\nix$ and $\gss_n:=\Var S_n$, and
assume that $\gss_n>0$ for all large $n$.
Assume also that for some fixed $r>2$, as \ntoo,
\begin{align}\label{lyap}
\frac{ m^{r-1}_n}{\gs_n^r} \sumiln \E{|X\nix|^r} \to0.
\end{align}
Then
\begin{align}\label{tmncltLy}
  S_n/\gs_n\dto N(0,1)
\qquad\asntoo.
\end{align}
\end{theorem}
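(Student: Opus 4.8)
The plan is to derive \refT{TmnLy} from \refT{Tmn} by the classical observation that a \Lyap{} condition implies the corresponding Lindeberg condition; the only thing to watch is that the powers of $m_n$ line up. First I would normalize, dividing every $X\nix$ by $\gs_n$ (ignoring the finitely many $n$, if any, with $\gs_n=0$), so that we may assume $\gs_n=1$. Then \eqref{lyap} reads $m_n^{r-1}\sumiln\E|X\nix|^r\to0$, and it suffices to verify the Lindeberg condition \eqref{tmnL} in the form: for every $\eps>0$, $m_n\sumiln\E\bigsqpar{X\nix^2\indic{|X\nix|>\eps/m_n}}\to0$.

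The key step is an elementary pointwise bound. Since $r>2$, the function $t\mapsto t^{2-r}$ is decreasing on $(0,\infty)$, so on the event $\set{|X\nix|>\eps/m_n}$ we have $|X\nix|^{2-r}\le\bigpar{\eps/m_n}^{2-r}$ and hence
\begin{align*}
  X\nix^2\indic{|X\nix|>\eps/m_n}
  &=|X\nix|^{2-r}\cdot|X\nix|^r\indic{|X\nix|>\eps/m_n}
\\ &
  \le\bigpar{\eps/m_n}^{2-r}|X\nix|^r
  =\eps^{2-r}m_n^{r-2}|X\nix|^r .
\end{align*}
Taking expectations, summing over $i$, and multiplying by $m_n$ gives
\begin{align*}
  m_n\sumiln\E\bigsqpar{X\nix^2\indic{|X\nix|>\eps/m_n}}
  \le\eps^{2-r}\,m_n^{r-1}\sumiln\E|X\nix|^r ,
\end{align*}
and the \rhs{} tends to $0$ for each fixed $\eps>0$ by \eqref{lyap}. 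Thus \eqref{tmnL} holds, and \refT{Tmn} yields the conclusion \eqref{tmncltLy}.

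There is no real obstacle here: the whole argument is the textbook ``\Lyap{} implies Lindeberg'' estimate, and the only point worth noting is the bookkeeping of the $m_n$ factors --- the single factor $m_n$ in \eqref{tmnL} together with the factor $m_n^{r-2}$ gained from the truncation level $\eps/m_n$ combine to the factor $m_n^{r-1}$ appearing in \eqref{lyap}. (The fixed-$m$ corollary of \refT{Tm} is the special case $m_n\equiv m$, for which $m^{r-1}$ is a constant and \eqref{lyap} reduces to the usual \Lyap{} condition $\gs_n^{-r}\sumiln\E|X\nix|^r\to0$; it follows in the same way from \eqref{tmL} by the computation above with $m_n=1$.)
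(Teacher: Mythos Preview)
Your proof is correct and follows essentially the same route as the paper: both verify the Lindeberg condition \eqref{tmnL} from the \Lyap{} condition \eqref{lyap} via the pointwise bound $X\nix^2\indic{|X\nix|>\eps\gs_n/m_n}\le(\eps\gs_n/m_n)^{2-r}|X\nix|^r$, and then invoke \refT{Tmn}. The only cosmetic difference is that you normalize to $\gs_n=1$ first, while the paper keeps $\gs_n$ explicit throughout; the resulting inequality is identical.
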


\begin{proof}
We have
\begin{align}\label{ly1}
% \E\bigsqpar{X\nix^2\indic{|X\nix|>\eps\gs_n/m_n}}
\E\Bigsqpar{X\nix^2\Bigindic{|X\nix|>\frac{\eps\gs_n}{m_n}}}
\le \Bigpar{\frac{m_n}{\eps\gs_n}}^{r-2} \E{|X\nix|^r}
\end{align}
and thus
\begin{align}\label{ly2}
\frac{ m_n}{\gss_n} \sumiln 
\E\Bigsqpar{X\nix^2\Bigindic{|X\nix|>\frac{\eps\gs_n}{m_n}}}
\le
\eps^{2-r}
\frac{ m_n^{r-1}}{\gs_n^r} \sumiln 
\E{|X\nix|^r}.
\end{align}
  Hence, \eqref{tmnL} follows from \eqref{lyap}, and \refT{Tmn} applies.
\end{proof}

\begin{remark}
  In the classical case with independent summands, the \Lyap{} condition
  gets stronger as the exponent $r$ increases, so the most general result
  is obtained with $r$ small (i.e., close to 2).
However, this is not always the case here;
see \refEs{Econd+} and  \ref{ERW}.
Thus different values of $r$ yield
incomparable conditions, 
so in an application $r$ may have to be adapted to
the problem.
\end{remark}

We next compare \refT{TmnLy} to the results of \citet{Berk} and
  \citet{RomanoWolf}, and show that their theorems follow from \refT{TmnLy}.
We will see in \refE{ERW} that the implications are strict; there are 
examples where \refT{TmnLy} applies but not \cite{Berk} or \cite{RomanoWolf}.

\begin{example}\label{EBerk}
\citet[Theorem(i)(iii)(iv)]{Berk} assumes, in our notation,
for some $\gd>0$ and constants $C$ and $c$,
\begin{align}
  \E |X\nix|^{2+\gd}&\le C,\label{berki}
\\
\gss_n/\Nn&\to c>0\label{berkiii}
\\
m_n^{2+2/\gd}&=o(\Nn).\label{berkiv}
\end{align}
With $r:=2+\gd$, we obtain  from \eqref{berki}--\eqref{berkiii} (for large $n$)
\begin{align}
  \frac{ m^{r-1}_n}{\gs_n^r} \sumiln \E{|X\nix|^r} 
\le
  \CC\frac{ m^{1+\gd}_n}{\Nnx^{(2+\gd)/2}} \Nn 
=
  \CCx\frac{ m^{1+\gd}_n}{\Nnx^{\gd/2}}
=
 \CCx\lrpar{\frac{ m^{2+2/\gd}_n}{\Nn}}^{\gd/2}.
\end{align}
Hence, \eqref{lyap} follows from \eqref{berkiv}.
Consequently, the theorem in \cite{Berk} is a special case of \refT{TmnLy}.
(Note that we have not used the assumption (ii) in \cite{Berk}; thus 
\refT{TmnLy} is stronger, and more convenient to apply.)
\end{example}

\begin{example}\label{ERomanoWolf}\CCreset
\citet{RomanoWolf} show that their theorem extends the result by
\citet{Berk}  discussed in \refE{EBerk}.
\citet[Theorem 2.1(1)(3)(5)(6)]{RomanoWolf}
assume, in our notation, for some $\gd>0$ and $\gam\in[-1,1)$, 
and some $\gD_n$ and $L_n$,
\begin{align}
  \E|X\nix|^{2+\gd}&\le\gD_n,\label{RW1}
\\
\gss_n/\xpar{\Nn m_n^\gam} &\ge L_n,\label{RW3}
\\
\gD_n /L_n^{(2+\gd)/2}&=O(1),\label{RW5}
\\
m_n^{1+(1-\gam)(1+2/\gd)}/\Nn&\to0.\label{RW6}
\end{align}
With $r:=2+\gd$, we obtain by \eqref{RW1}, \eqref{RW5}, \eqref{RW3}, 
\eqref{RW6}, for some constant $C$,
\begin{align}
    \frac{ m^{r-1}_n}{\gs_n^r} \sumiln \E{|X\nix|^r} 
&\le 
\frac{ m^{1+\gd}_n}{\gs_n^{2+\gd}} \Nn \gD_n
\le 
C\frac{ m^{1+\gd}_n}{\gs_n^{2+\gd}} \Nn L_n^{(2+\gd)/2}
\le 
C\frac{ m^{1+\gd}_n}{(\Nn m_n^\gam)^{(2+\gd)/2}} \Nn 
\notag\\&
=C\frac{ m^{(1+\gd/2)(1-\gam)+\gd/2}_n}{\Nnx^{\gd/2} }
=C\lrpar{\frac{ m^{(2/\gd+1)(1-\gam)+1}_n}{\Nn}}^{\gd/2} 
\to0
.\end{align}
Hence, \eqref{lyap} follows.  
Consequently, the theorem in \cite{RomanoWolf} is a
special case of \refT{TmnLy}. 
(Note that we did not use assumptions (2) and (4) in \cite{RomanoWolf}.
Also, our condition is simpler and seems easier to apply.)
\end{example}

\begin{remark}\label{RRio}
\citet[Corollary 1]{Rio} is a result stated much more generally
for strongly mixing triangular arrays,  where the mixing rate may depend on $n$.
In the special case of an \mndep{} array, we have 
(in the notation of \cite{Rio}) 
$\ga_{(n)}\qw(x)\le m_n+1$, 
and using this it is easy to see that, assuming $m_n\ge1$, condition (b) in the
corollary in \cite{Rio} holds if
\begin{align}
  \label{rio}
\frac{m_n}{\gss_n}
\sum_i\E\Bigsqpar{X\nix^2\Bigpar{\frac{m_n}{\gs_n} |X_n|\bmin1}}
\to0.
\end{align}
Furthermore, it can be seen that \eqref{rio} also implies condition (a) in
the corollary,
and thus the corollary then yields asymptotic normality.

Note that the condition \eqref{rio} is intermediate between \eqref{tmnL} and
\eqref{lyap} for $r=3$. More precisely, it is easily seen that \eqref{rio}
implies \eqref{tmnL} (and thus this special case of \cite[Corollary 1]{Rio}
follows from \refT{Tmn}); 
on the other hand, \eqref{lyap} with $r=3$ implies \eqref{rio}, and thus the
case $r=3$ of \refT{TmnLy} follows from \cite[Corollary~1]{Rio}.

Finally, we note that in \refE{Econd+} below, it follows from 
\eqref{e1gss}, \eqref{marie} and \eqref{e1lyap}
that \eqref{rio} holds only if $\ga>1/3$. Hence, \refTs{Tm} and \ref{Tmn} do
not follow from this special case of \cite{Rio}.
\end{remark}

\section{Examples}
We give some examples illustrating the various conditions.

\begin{example}
  \label{Econd+}
Let $\xi_i$ and $\eta_i$, $i\ge0$, 
be \iid{} random variables with $\P(\xi_i=\pm1)=\P(\eta_i=\pm1)=\frac12$.
Let $\Nn:=n$, let $0<\ga<\frac12$, and define
\begin{align}\label{e1x}
  X_{ni}:= n\qqw \xi_i + n^{-\ga}\bigpar{\eta_i-\eta_{i-1}},
\qquad 1\le i\le n.
\end{align}
Then
\begin{align}\label{e1sn}
  S_n=n\qqw\sumin\xi_i+n^{-\ga}\bigpar{\eta_n-\eta_0}.
\end{align}
It follows that we have
\begin{align}\label{e1gss}
  \gss_n=\Var S_n = 1+2n^{-2\ga}\to1,
\end{align}
and, by the standard central limit theorem,
\begin{align}\label{e1lim}
  S_n\dto N(0,1).
\end{align}
The triangular array $(X\nix)$ is 1-dependent, and \eqref{tmL} is trivial
since
\begin{align}\label{marie}
  |X\nix|\le n\qqw+2n^{-\ga}\to0.
\end{align}
Thus \refT{Tm} applies and yields \eqref{e1lim}.
However,
\begin{align}
  \sumin\Var X\nix 
=n\bigpar{n\qw+2n^{-2\ga}}
=1+ 2n^{1-2\ga}\to\infty,
\end{align}
so \eqref{cond+} does not hold.
Thus, as said in \refS{S:intro}, \refT{Tm} is  more general than
previous versions assuming also \eqref{cond+}.

Furthermore, let us check the \Lyap{} condition \eqref{lyap}. 
We have by \eqref{e1x}, since $n\qqw\ll n^{-\ga}$,
\begin{align}
  \E|X\nix|^r
\sim n^{-r\ga}\E|\eta_i-\eta_{i-1}|^r
= cn^{-r\ga}
\end{align}
for some constant $c>0$. (In fact, $c=2^{r-1}$.)
Hence, 
\begin{align}\label{e1lyap}
\frac{ 1}{\gs_n^r} \sumin \E{|X\nix|^r} 
\sim cn^{1-r\ga}. 
\end{align}
Since $m=1$, \eqref{e1lyap} shows that
\eqref{lyap} holds if $r>1/\ga$, but not if $2<r<1/\ga$.
Hence, in this example, the \Lyap{} condition gets weaker if $r$ is
increased, and not stronger as in the independent case.
\end{example}

\begin{example}\label{EL}
  Let $m_n\ge1$ be a given sequence,
and let $(Y\nix)$ be a triangular array with independent rows.
Define the array $(X\nix)$ by repeating each random variable $Y\nix$ $m_n$
times, and dividing it by $m_n$. In other words, we define
$X\nix:=m_n\qw Y_{n,\ceil{i/m_n}}$.
Then, denoting the row-wise sums by $S_n^X$ and $S_n^Y$, we have
$S_n^X=S_n^Y$.
Moreover, for any $\eps>0$,
\begin{align}
  \sum_i \E\bigsqpar{X\nix^2\indic{|X\nix|>\eps\gs_n/m_n}}
&=
  \sum_jm_n \E\bigsqpar{(Y_{nj}/m_n)^2\indic{|Y_{nj}|>\eps\gs_n}}
\notag\\&
=
m_n\qw  \sum_j \E\bigsqpar{Y_{nj}^2\indic{|Y_{nj}|>\eps\gs_n}}  
.\end{align}
Hence, the condition \eqref{tmnL} is equivalent to the 
usual Lindeberg condition on $(Y_{nj})$.
This shows that \eqref{tmnL} is a natural version of the Lindeberg condition
for \mndep{} arrays, and that it cannot be weakened.

Similarly, the \lhs{} of \eqref{lyap} is the same for $(X\nix)$ and for
$(Y_{nj})$; 
this shows that \eqref{lyap} is a natural version of the \Lyap{} condition
for \mndep{} arrays.
\end{example}

\begin{example}
  \label{ERW}
Let $\xi_i$, $i\ge1$, and $\eta$ be \iid{} $N(0,1)$ variables.
Let $m_n\to\infty$ with $m_n=o(n\qq)$, and take $\Nn:=n+m_n\sim n$.
Define
\begin{align}
  X\nix:=
  \begin{cases}
    \xi_i, & 1\le i\le n,\\
\eta, & n<i\le n+m_n.
  \end{cases}
\end{align}
Then $(X\nix)$ is an   \mndep{} triangular array. Furthermore,
\begin{align}
  \gss_n=n+m_n^2\sim n.
\end{align}
Moreover, $S_n\in N(0,\gss_n)$, so \eqref{tmclt} is trivial.
The \lhs{} of \eqref{lyap} is
\begin{align}
  \sim \frac{m_n^{r-1}}{n^{r/2}}\Nn \E|\xi_1|^r
\sim c_r\frac{m_n^{r-1}}{n^{r/2-1}}
\end{align}
for some constant $c_r>0$, and thus \eqref{lyap} holds if and only if
\begin{align}\label{elmn}
  m_n = o\bigpar{n^{(r-2)/(2(r-1))}}.
\end{align}
Note that the exponent in \eqref{elmn} increases with $r$.
Consequently,
as in \refE{Econd+} but for another reason,
the \Lyap{} condition \eqref{lyap}  gets weaker if $r$ is increased.
In the present example, choosing a larger $r$ means weakening the
restriction on $m_n$.
On the other hand, if we modify the example and let $\xi_i$ and $\eta$ have
some other (centred) distribution, a larger $r$ also means a stronger moment
condition on the variables, so there might be a trade-off.

We note also that this example does not satisfy the conditions in
\cite{RomanoWolf}, and thus not the stronger conditions in \cite{Berk}.
To see this, note that conditions (2), (4) and (3) in \cite{RomanoWolf}
(choosing $k=m_n$ and $a=n+1$) imply
\begin{align}
m_n^2
=  \Var \sum_{i=n+1}^{m_m+n} X_{ni}
\le C m_n^{1+\gam} \frac{\gss_n}{\Nn m_n^\gam}
\sim Cm_n
\end{align}
and thus $m_n=O(1)$, contradicting our assumptions.
\end{example}

\newcommand\AAP{\emph{Adv. Appl. Probab.} }
\newcommand\JAP{\emph{J. Appl. Probab.} }
\newcommand\JAMS{\emph{J. \AMS} }
\newcommand\MAMS{\emph{Memoirs \AMS} }
\newcommand\PAMS{\emph{Proc. \AMS} }
\newcommand\TAMS{\emph{Trans. \AMS} }
\newcommand\AnnMS{\emph{Ann. Math. Statist.} }
\newcommand\AnnPr{\emph{Ann. Probab.} }
\newcommand\CPC{\emph{Combin. Probab. Comput.} }
\newcommand\JMAA{\emph{J. Math. Anal. Appl.} }
\newcommand\RSA{\emph{Random Structures Algorithms} }
\newcommand\DMTCS{\jour{Discr. Math. Theor. Comput. Sci.} }

\newcommand\AMS{Amer. Math. Soc.}
\newcommand\Springer{Springer-Verlag}
\newcommand\Wiley{Wiley}

\newcommand\vol{\textbf}
\newcommand\jour{\emph}
\newcommand\book{\emph}
\newcommand\inbook{\emph}
\def\no#1#2,{\unskip#2, no. #1,} %(typeset after year) 
\newcommand\toappear{\unskip, to appear}

\newcommand\arxiv[1]{\texttt{arXiv:#1}}
\newcommand\arXiv{\arxiv}

\newcommand\xand{and }
\renewcommand\xand{\& }

\def\nobibitem#1\par{}

\end{document}